\newtheorem{theorem}{Theorem}[section]
\newtheorem{corollary}[theorem]{Corollary}
\newtheorem{lemma}[theorem]{Lemma}
\theoremstyle{definition}
\def \Q {\mathbb{Q}}
\def \Qbar {\overline{\mathbb{Q}}}
\DeclareMathOperator{\rk}{rk}
\DeclareMathOperator{\Cl}{Cl}
\DeclareMathOperator{\Jac}{Jac}
\DeclareMathOperator{\ord}{ord}
\def \O {\mathcal{O}}
\numberwithin{equation}{section}
\title[Hilbert's Irreducibility Theorem and Ideal Class Groups]{Hilbert's Irreducibility Theorem and Ideal Class Groups of Quadratic Fields}
\author{Kaivalya Kulkarni}
\author{Aaron Levin}
\address{Okemos High School, 2800 Jolly Rd., Okemos, MI 48864, USA and \\
Department of Mathematics\\
Michigan State University\\
East Lansing, MI 48824\\
USA
}
\address{
Department of Mathematics\\
Michigan State University\\
East Lansing, MI 48824\\
USA
}
\email{kulkar80@msu.edu}
\email{adlevin@math.msu.edu}
\date{}
\begin{document}

\begin{abstract}
We prove a version of Hilbert's Irreducibility Theorem in the quadratic case, giving a quantitative improvement to a result of Bilu-Gillibert in this restricted setting. As an application, we give improvements to several quantitative results counting quadratic fields with certain types of ideal class groups. The proof of the main theorem is based on a result of Stewart and Top on values of binary forms modulo squares.
\end{abstract}

\maketitle 

\section{Introduction}

The Hilbert Irreducibility Theorem plays a key role in recent approaches to constructing and counting number fields with a large ideal class group (originating in work of the second author \cite{LevBordeaux} and joint work of the second author with Gillibert \cite{GL1}).  Recent applications of these techniques to study ideal class groups include work of Bilu-Gillibert \cite{BG}, A. Kulkarni \cite{Kul}, work of the second author with Gillibert \cite{GL2}, and work of the second author with Wiljanen and Yan \cite{LSW}.

Let $H(\alpha)$ denote the absolute multiplicative height of an algebraic number $\alpha$.  If $\alpha=p/q\in\mathbb{Q}$ is written in reduced form, then $H(p/q)=\max\{|p|,|q|\}$.
Partially in pursuit of applications to ideal class groups, Bilu and Gillibert \cite[Th.~3.1]{BG} proved the following version of Hilbert's Irreducibility Theorem (building on an enumerative result of Dvornicich and Zannier \cite{DZ}):

\begin{theorem}[Bilu-Gillibert {\cite[Th.~3.1]{BG}}]
\label{HIT2}
Let $k$ be a number field of degree $\ell$ over $\mathbb{Q}$.  Let $C$ be a curve over $k$ and $\phi:C\to\mathbb{P}^1$ a morphism (over $k$) of degree $d$.  Let~$S$ be a finite set of places of~$k$, $\epsilon>0$, and $\mho$ a thin subset of~$k$ \cite[\S 3.1]{BG}. Then there exist constants $B_0$ and $c$ such that for all $B \geq B_0$, among the number fields $k(P)$, where the point $P\in C(\bar k)$ satisfies 
\begin{align*}
\phi(P)&\in k\setminus \mho,\\
|\phi(P)|_v&<\epsilon, \qquad \forall v\in S,\\
H(\phi(P))&\le B,
\end{align*}
there exist at least $c B^\ell/\log B$ distinct fields of degree~$d$ over~$k$.
\end{theorem}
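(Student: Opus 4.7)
The plan is to parametrize the points $P \in C(\bar k)$ with $\phi(P) \in k$ by the element $t := \phi(P)$, then (i) lower bound the number of admissible $t$'s of height at most $B$ and (ii) upper bound the number of such $t$'s producing any single field $k(P)$; dividing (i) by (ii) gives a lower bound on the number of distinct fields.

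For step (i), classical Hilbert Irreducibility shows that the set of $t \in k$ for which the fiber $\phi^{-1}(t)$ fails to be a single Galois orbit of size $d$ is a thin subset of $k$. Absorbing this into $\mho$, every $t \in k \setminus \mho$ determines a well-defined field $K_t := k(P)$ of degree exactly $d$ over $k$, independent of the choice of $P \in \phi^{-1}(t)$. By Schanuel's theorem, $\#\{t \in k : H(t) \leq B\} \sim c_k B^\ell$. The local conditions $|t|_v < \epsilon$ for $v \in S$ cut out an open set of positive Haar measure in $\prod_{v \in S} k_v$, so a refinement of Schanuel (or an equidistribution argument) shows that a positive proportion of the $t$'s satisfy them. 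Since a thin subset of $k$ contains only $O(B^{\ell - \delta})$ points of height $\leq B$ for some $\delta > 0$ by standard estimates, removing $\mho$ only loses lower-order terms, leaving $\gg B^\ell$ admissible $t$'s.

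For step (ii), the crucial input is a \emph{uniform in $K$} upper bound on the number of $t \in k$ with $H(t) \leq B$ and $K_t = K$. This is precisely the content of the enumerative theorem of Dvornicich--Zannier, which gives a bound of the form $O(\log B)$ for each fixed $K$: a $t$ with $K_t = K$ corresponds to a $K$-rational point on an auxiliary cover of $C$ obtained by base-changing $\phi$, and a gap principle for the heights of such points forces a logarithmic bound in each height window. Combining with (i) yields at least $c B^\ell / \log B$ distinct fields of degree $d$ over $k$.

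The main obstacle is securing the uniformity in $K$ of the Dvornicich--Zannier count in step (ii); one must ensure that the implicit constant depends only on $C$, $\phi$, and $k$ and not on the target field $K$, which is the technical heart of the argument and the source of the $\log B$ in the denominator. The remaining ingredients — Schanuel counting, thin-set estimates, and the incorporation of open local conditions — are standard.
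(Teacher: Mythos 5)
A preliminary remark: the paper does not actually prove Theorem \ref{HIT2}; it is quoted from Bilu--Gillibert \cite[Th.~3.1]{BG} as background. The paper's own new argument (for Theorem \ref{HIT}, the case $k=\Q$, $d=2$) proceeds quite differently from your proposal: it invokes the Stewart--Top theorem (Theorem \ref{STth}) to produce $\gg B^{2}/(\log B)^{2}$ \emph{distinct squarefree integers} $t$ with $F(a,b)=tz^2$, each of which directly yields a distinct field $\Q(\sqrt{t})$; no bound on how many specializations land in a given field is ever needed.

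Your count-and-divide scheme has a genuine gap at step (ii), and a miscount at step (i). For (i): Schanuel gives $\#\{t\in k : H(t)\le B\}\asymp B^{2\ell}$, not $B^{\ell}$; the exponent $\ell$ in the theorem arises because Dvornicich--Zannier restrict to $t\in\O_k$ with all conjugates bounded by $B$, of which there are $\asymp B^{\ell}$. (Had both of your claims been correct, you would have deduced $\gg B^{2\ell}/\log B$ fields, which \cite[Rem.~3.2]{BG} records as open --- a warning sign.) For (ii): the asserted uniform bound $\#\{t : H(t)\le B,\ K_t=K\}=O(\log B)$ is false, and it is not the content of \cite{DZ}, whose theorem is the final field count itself, obtained by tagging each field with a large prime that ramifies in it and counting primes --- that is where the single $\log$ comes from. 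Counterexample: take $C=\mathbb{P}^1$, $\phi(x)=x^2$, $k=\Q$, so $K_t=\Q(\sqrt{t})$; for a fixed squarefree $m$, the integers $t=mv^2\le B$ number $\asymp\sqrt{B/m}$, and the rationals $t=mu^2$ of height $\le B$ number $\asymp B$, both vastly exceeding $\log B$. In that example, count-and-divide over integral $t$ yields only $B/\sqrt{B}=\sqrt{B}$ fields, short of the claimed $B/\log B$, so the method cannot be rescued merely by proving a correct uniform multiplicity bound; one needs either the Dvornicich--Zannier ramification device or, in the quadratic case, the Stewart--Top count of distinct squarefree kernels that the paper uses.
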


It was remarked by Bilu and Gillibert \cite[Rem.~3.2]{BG} that Theorem \ref{HIT2} likely holds with a lower bound of the form $cB^{2\ell}/(\log B)^A$ for some $A>0$. Our main result shows that this predicted lower bound holds (with $A=2$) when $k=\mathbb{Q}, \ell=1$, and $d=2$:

\begin{theorem}
\label{HIT}
Let $C$ be a curve over $\mathbb{Q}$ and $\phi:C\to\mathbb{P}^1$ a morphism of degree $2$.  Let~$S$ be a finite set of places of~$\mathbb{Q}$, $\epsilon>0$, and $\mho$ a thin subset of~$\mathbb{Q}$. Then there exist constants $B_0$ and $c$ such that for all $B \geq B_0$, among the number fields $\Q(P)$, where the point $P\in C(\Qbar)$ satisfies 
\begin{align*}
\phi(P)&\in \Q\setminus \mho,\\
|\phi(P)|_v&<\epsilon, \qquad \forall v\in S,\\
H(\phi(P))&\le B,
\end{align*}
there exist at least $cB^{2}/(\log B)^{2}$ distinct quadratic fields over~$\Q$.
\end{theorem}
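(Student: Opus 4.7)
The plan is to reduce Theorem \ref{HIT} to a counting problem for squarefree parts of values of a binary form, and then apply the Stewart--Top theorem on squarefree values of binary forms alluded to in the abstract. For the reduction, I use that a degree-$2$ morphism $\phi\colon C \to \mathbb{P}^1$ over $\Q$ corresponds to a quadratic function-field extension $\Q(C)/\Q(t)$, which may be written as $\Q(C) = \Q(t, \sqrt{f(t)})$ for some nonconstant polynomial $f(t) \in \mathbb{Z}[t]$ that is squarefree as a polynomial. For any $t_0 = p/q \in \Q$ in lowest terms with $f(t_0)\neq 0$, any point $P \in \phi^{-1}(t_0)$ satisfies $\Q(P) = \Q(\sqrt{f(p/q)}) = \Q(\sqrt{G(p,q)})$, where the binary form $G(x,y) \in \mathbb{Z}[x,y]$ is obtained from $f$ by clearing denominators (take $G(x,y) = y^{\deg f} f(x/y)$ if $\deg f$ is even, or $G(x,y) = y^{\deg f + 1} f(x/y)$ if $\deg f$ is odd). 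Since $f$ is squarefree as a polynomial, so is $G$ as a binary form, and hence $G$ has nonzero discriminant. Thus counting distinct quadratic fields $\Q(P)$ is reduced to counting distinct squarefree parts of the integers $G(p,q)$.

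The key analytic input is a Stewart--Top-type lower bound of the form
\[
\#\{\text{squarefree parts of } G(p,q) : \gcd(p,q)=1,\ \max(|p|,|q|)\leq B\} \gg \frac{B^2}{(\log B)^2},
\]
valid for a squarefree binary form $G$ of degree at least $2$. Each distinct squarefree part yields a distinct quadratic field $\Q(\sqrt{\cdot})$, so this bound directly produces the count of Theorem \ref{HIT}, once the remaining hypotheses are accommodated. For each $v \in S$, the constraint $|p/q|_v < \epsilon$ restricts $(p,q)$ to a prescribed $v$-adic neighborhood (for $v=\infty$, the sector $|p| < \epsilon|q|$; for finite $v$, a divisibility/residue condition), and the Stewart--Top count continues to give $\gg B^2/(\log B)^2$ distinct squarefree parts on such a positive-density sub-region. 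The thin subset $\mho$ is removed using the classical estimate $\#\{p/q \in \mho : H(p/q) \leq B\} = O(B^{3/2})$, which is negligible against $B^2/(\log B)^2$.

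The main obstacle is ensuring that the $B^2/(\log B)^2$ lower bound genuinely persists after intersecting with the $v$-adic regions imposed by $S$. A crude count via bounds on integer points on the fibers $dz^2 = G(p,q)$ (for example, Evertse-type bounds when $\deg G$ is large enough) only yields $B^{2-o(1)}$, so the logarithmic savings come genuinely from the analytic sieve argument of Stewart--Top; if their theorem is not directly stated in a form adapted to restricted boxes, one must revisit its proof to verify uniformity in the $v$-adic constraints. Once this point is handled, the remaining steps --- the function-field reduction, the thin-set removal, and the passage from squarefree parts to distinct quadratic fields --- are routine.
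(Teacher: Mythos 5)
Your overall skeleton is the same as the paper's: reduce to counting distinct squarefree parts of values of a binary form obtained by homogenizing $f$, invoke Stewart--Top for the $\gg B^2/(\log B)^2$ count, and discard the thin set by a polynomial-in-$B$ upper bound (the paper uses Serre's $O(B)$ bound; your $O(B^{3/2})$ would also suffice since it is $o(B^2/(\log B)^2)$). However, there is a genuine gap at exactly the point you flag as ``the main obstacle'': Stewart--Top's theorem, as it exists in the literature, allows congruence conditions $a\equiv A$, $b\equiv B \pmod{M}$ on the arguments, but it does \emph{not} restrict $(a,b)$ to an archimedean sector such as $|p|<\epsilon|q|$, nor does it bound $\max\{|a|,|b|\}$ by $x^{1/r}$ (which you need in order to convert the count into one respecting $H(\phi(P))\le B$). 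Asserting that ``the Stewart--Top count continues to give $\gg B^2/(\log B)^2$ \dots on such a positive-density sub-region'' is not something you can get by a density argument: their proof is constructive, producing specific pairs $(a,b)$ in each of three structural cases for the form $F$, and one must check case by case that the construction can be steered into the desired region. This verification is the actual technical content of the paper beyond quoting Stewart--Top.

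The paper resolves this with two devices you would need to supply. First, rather than imposing the sector condition directly, it precomposes with an invertible linear fractional transformation $\psi(t)=(1-Nt)/(1+Nt)$ (Lemma \ref{badprimes}), so that \emph{all} the conditions $|t|_v<\epsilon$ for $v\in S$ (archimedean and finite alike) become the single clean condition ``$a,b>0$ together with $a\equiv A$, $b\equiv B\pmod{M}$'' on the numerator and denominator of $\psi(t)$; one then homogenizes $f\circ\psi^{-1}$ instead of $f$. Second, it reworks Stewart--Top's construction (Theorem \ref{STth}) to guarantee that the pairs $(a,b)$ produced are positive and satisfy $a,b\le x^{1/r}$; in the hardest case this requires a small lemma showing that a reduced basis of a rank-$2$ sublattice of $\mathbb{Z}^2$ can be replaced by a basis with nonnegative coordinates of comparable size. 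Without these steps your argument is a correct plan but not a proof: the reduction and thin-set removal are indeed routine, but the adaptation of the sieve input is not, and it is where the theorem is actually earned.
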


Using Theorem \ref{HIT} in place of Theorem \ref{HIT2} allows us to recover and improve on several enumerative results involving ideal class groups of quadratic number fields. Given an integer $m>1$, it has been known since Nagell \cite{Nagell} that there are infinitely many imaginary quadratic number fields with class number divisible by $m$, and the analogous result for real quadratic fields was proved independently by Yamamoto \cite{Yam} and Weinberger \cite{Wein}. Quantitative results giving a lower bound for the number of such fields were given by Murty \cite{Murty}, Soundararajan \cite{Sound}, and Yu \cite{Yu}.

More generally, one can study the $m$-rank of the ideal class group. If $A$ is a finitely generated abelian group, we define the $m$-rank of $A$, $\rk_m A$, to be the largest integer $r$ such that $A$ has a subgroup isomorphic to $(\mathbb{Z}/m\mathbb{Z})^r$. For a number field $k$, we let $\Cl(k)$ denote its ideal class group and let $d_k$ denote its (absolute) discriminant.

As an application of Theorem \ref{HIT}, we first state a general result counting quadratic number fields with a large class group generated, via the technique of \cite{GL1}, from a hyperelliptic curve with a rational Weierstrass point and a large rational torsion subgroup in its Jacobian. The result is identical to \cite[Cor.~3.2]{GL1}, except that we improve the lower bound for an asymptotic count of such fields, by discriminant, from $X^{\frac{1}{2g+1}}/\log X$ to $X^{\frac{1}{g+1}}/(\log X)^2$ (up to a constant factor).

\begin{theorem}
\label{chyp}
Let $C$ be a smooth projective hyperelliptic curve over $\Q$ with a $\Q$-rational Weierstrass point. Let $g$ denote the genus of $C$ and let $\Jac(C)(\Q)_{\rm{tors}}$ denote the rational torsion subgroup of the Jacobian of $C$.  Let $m>1$ be an integer.  Then there exist $\gg \frac{X^{\frac{1}{g+1}}}{(\log X)^2}$ imaginary quadratic number fields $k$ with
\begin{equation*}
\rk_m \Cl(k)\geq \rk_m \Jac(C)(\Q)_{\rm{tors}},\quad |d_k|<X,
\end{equation*}
and $\gg \frac{X^{\frac{1}{g+1}}}{(\log X)^2}$ real quadratic number fields $k$ with
\begin{equation*}
\rk_m \Cl(k)\geq \rk_m \Jac(C)(\Q)_{\rm{tors}}-1,\quad d_k<X.
\end{equation*}
\end{theorem}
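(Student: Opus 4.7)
The plan is to follow the hyperelliptic construction from \cite{GL1} essentially verbatim, substituting Theorem~\ref{HIT} for the Bilu--Gillibert HIT used there; the entire improvement comes from this substitution. Using the rational Weierstrass point, write $C$ in affine Weierstrass form $y^{2}=f(x)$ with $f\in\mathbb{Q}[x]$ squarefree of degree $2g+1$, and let $\phi:C\to\mathbb{P}^{1}$ be the $x$-coordinate map (which has degree $2$). For $a\in\mathbb{Q}$ with $f(a)$ a nonsquare, the fiber $\phi^{-1}(a)$ is defined over the quadratic field $k_{a}:=\mathbb{Q}(\sqrt{f(a)})$.

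I would take the class-group rank statement from \cite{GL1} as a black box: there is a thin set $\mho_{0}\subset\mathbb{Q}$ such that, for every $a\in\mathbb{Q}\setminus\mho_{0}$, each rational torsion class in $\Jac(C)(\mathbb{Q})_{\rm tors}$ lifts, via the standard pullback from $\mathbb{Z}$ to the ring of integers $\O_{k_{a}}$, to a class in $\Cl(k_{a})$ of the same order.  In the imaginary case the induced map on $m$-torsion is injective; in the real case its kernel is cyclic and at worst of order $2$, coming from the fundamental unit, which accounts for the ``$-1$'' in the real-field bound.

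To separate real from imaginary fields I would exploit the parameters $S$ and $\epsilon$ in Theorem~\ref{HIT}. Since $\deg f$ is odd, choose $a_{0}\in\mathbb{Q}$ with $f(a_{0})<0$ (respectively $f(a_{0})>0$) and apply Theorem~\ref{HIT} to the translated degree-two map $P\mapsto\phi(P)-a_{0}$, with $S=\{\infty\}$ and $\epsilon>0$ small enough that $f(a_{0}+t)$ retains the desired sign for all $|t|<\epsilon$; one also enlarges the thin set to $\mho:=\mho_{0}\cup\{a\in\mathbb{Q}:f(a)\in(\mathbb{Q}^{*})^{2}\}$, which remains thin.  Theorem~\ref{HIT} then yields $\gg B^{2}/(\log B)^{2}$ distinct imaginary (respectively real) quadratic fields $k_{a}$ with $H(a)\leq B$.

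It remains only to translate the height into the discriminant.  Writing $a=p/q$ in lowest terms with $\max(|p|,|q|)\leq B$, one has $f(a)=F(p,q)/q^{2g+1}$ where $F$ is the homogenization of $f$, so the square class of $f(a)$ is represented by the integer $q\,F(p,q)$, of magnitude $\ll B^{2g+2}=B^{2(g+1)}$.  Hence $|d_{k_{a}}|\ll B^{2(g+1)}$, and choosing $B\asymp X^{1/(2(g+1))}$ produces the claimed $\gg X^{1/(g+1)}/(\log X)^{2}$ quadratic fields of discriminant at most $X$.  The only non-routine input is the class-group construction of \cite{GL1}, which I invoke as a black box; I anticipate no genuine obstacle here beyond the bookkeeping of merging its exceptional thin set with the Hilbert-style thin sets allowed in Theorem~\ref{HIT}.
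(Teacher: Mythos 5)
Your overall architecture matches the paper's: put $C$ in odd-degree Weierstrass form $y^2=f(x)$, invoke the class-group construction of \cite{GL1} as a black box with its thin exceptional set, use the archimedean condition in Theorem \ref{HIT} to fix the sign of $f$ and hence real versus imaginary, and convert heights to discriminants via the homogenization of $f$ (your bound $|d_{k_a}|\ll B^{2g+2}$ and the choice $B\asymp X^{1/(2g+2)}$ are exactly the paper's). However, there is a genuine gap in how you state and use the black box. The actual result from \cite{GL1} (Theorem \ref{GLth} in the paper) does not say that the map on $m$-torsion is injective in the imaginary case with kernel of order at most $2$ in the real case; it gives
\begin{align*}
\rk_m \Cl(\Q(P))\geq \rk_m  \Jac(C)(\Q)_{\rm{tors}}+\#S-\rk \O_{\Q(P),S}^*,
\end{align*}
where $S$ is the set of primes of bad reduction of $C$ and $\O_{\Q(P),S}^*$ is the corresponding $S$-unit group. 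If a bad prime $p$ splits in $k_a$, it contributes two places to $S'$ and the defect $\#S-\rk\O_{\Q(P),S}^*$ can be as bad as $-\#S$ (imaginary) or $-\#S-1$ (real), destroying the claimed rank bound. Your application of Theorem \ref{HIT} with $S=\{\infty\}$ only controls the sign of $f(a)$ and does nothing to prevent this.

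The missing step is to impose $p$-adic conditions at the finite bad primes so that each of them ramifies in $k_a$. The paper does this by taking $\phi$ to be $(x,y)\mapsto x-N/M$ with $M=\prod_{p\in S}p$ and $(M,N)=1$, and requiring $|\phi(P)|_p<1$ for all $p\in S$: this forces $\ord_p x_0=-1$, hence (with $f\in\mathbb{Z}[x]$ monic of odd degree $d$) $\ord_p f(x_0)=-d$ odd, so $p$ ramifies in $\Q(\sqrt{f(x_0)})$. Then the places of $\Q(P)$ over $S$ number exactly $\#S$, Dirichlet's unit theorem gives $\#S-\rk\O_{\Q(P),S}^*=0$ in the imaginary case and $-1$ in the real case, and the stated inequalities follow. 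This is precisely why Theorem \ref{HIT} is formulated with an arbitrary finite set $S$ of places and the conditions $|\phi(P)|_v<\epsilon$; you need to invoke it with $S$ equal to the bad primes together with $\infty$, not with $\{\infty\}$ alone. The rest of your argument (enlarging the thin set by $\{a: f(a)\in(\Q^*)^2\}$, the discriminant bookkeeping) is fine once this is repaired.
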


We let $\mathcal{N}^-(m^r;X)$ and $\mathcal{N}^+(m^r;X)$ denote the number of imaginary quadratic and real quadratic number fields $k$, respectively, with discriminant $d_k$ satisfying $|d_k|\leq X$ and such that $\rk_m\Cl(k)\geq r$.

By noting (see \cite[Lemma~3.3]{GL1}) that for $c\in\mathbb{Q}\setminus\{ 0,\pm 1\}$, the smooth projective hyperelliptic curve $C$ with affine equation 
\begin{align*}
y^2=x^{2m}-(1+c^2)x^m+c^2
\end{align*}
has genus $m-1$, a rational Weierstrass point, and $\rk_m \Jac(C)(\Q)_{\rm tors}\geq 2$, we find as a corollary:
\begin{corollary}
\label{chyp2}
Let $m>1$ be an integer.  Then
\begin{align*}
\mathcal{N}^-(m^2;X)&\gg X^{\frac{1}{m}}/(\log X)^2,\\
\mathcal{N}^+(m;X)&\gg X^{\frac{1}{m}}/(\log X)^2.
\end{align*}
\end{corollary}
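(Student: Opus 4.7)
The plan is to apply Theorem~\ref{chyp} directly to the hyperelliptic curve
\[
C\colon y^2 = x^{2m} - (1+c^2)x^m + c^2 = (x^m - 1)(x^m - c^2)
\]
for a fixed $c \in \Q \setminus \{0, \pm 1\}$; all the hypotheses needed to invoke Theorem~\ref{chyp} are recorded in \cite[Lemma~3.3]{GL1} and summarized in the paragraph preceding the statement.

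First I would verify the three needed features of $C$. Smoothness and genus $g = m-1$ follow from the fact that the defining polynomial has $2m$ distinct roots, since $c \neq 0$ makes each factor separable and $c^2 \neq 1$ makes the two factor sets disjoint. The point $(1, 0) \in C(\Q)$ is a $\Q$-rational Weierstrass point. Finally, two independent $\Q$-rational $m$-torsion classes in $\Jac(C)$ can be extracted from the factorization $y^2 = (x^m - 1)(x^m - c^2)$ (for instance, the rational function $(y - (x^m - 1))/(y + (x^m - 1))$ witnesses that $m(\infty_+ - \infty_-)$ is principal, giving one such class), yielding $\rk_m \Jac(C)(\Q)_{\mathrm{tors}} \geq 2$; this is exactly \cite[Lemma~3.3]{GL1}.

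With these in hand, Theorem~\ref{chyp} applied with $g+1 = m$ delivers both estimates at once. The imaginary quadratic conclusion gives $\gg X^{1/m}/(\log X)^2$ imaginary quadratic fields $k$ with $|d_k| < X$ and $\rk_m \Cl(k) \geq 2$, which is precisely the desired bound for $\mathcal{N}^-(m^2; X)$. The real quadratic conclusion gives $\gg X^{1/m}/(\log X)^2$ real quadratic fields with $d_k < X$ and $\rk_m \Cl(k) \geq 2 - 1 = 1$, which is the desired bound for $\mathcal{N}^+(m; X)$. No substantive obstacle arises, since the corollary is a pure specialization of Theorem~\ref{chyp}; the only nontrivial input is the torsion-rank inequality, which is classical and already recorded in \cite[Lemma~3.3]{GL1}.
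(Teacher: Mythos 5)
Your proposal is correct and takes essentially the same route as the paper: the corollary is obtained by specializing Theorem~\ref{chyp} (with $g=m-1$, so $g+1=m$) to the curve $y^2=(x^m-1)(x^m-c^2)$, whose genus, rational Weierstrass point, and bound $\rk_m \Jac(C)(\Q)_{\rm tors}\geq 2$ are exactly the facts the paper imports from \cite[Lemma~3.3]{GL1} in the paragraph preceding the statement.
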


When $m$ is odd, this yields a small improvement to results of Byeon \cite{Byeon} and Yu \cite{Yu} (following results of Murty \cite{Murty}), who proved $\mathcal{N}^-(m^2;X)\gg X^{\frac{1}{m}-\epsilon}$ and $\mathcal{N}^+(m;X)\gg X^{\frac{1}{m}-\epsilon}$, respectively. When $m$ is even, in the real quadratic case Chakraborty, Luca, and Mukhopadhyay \cite{CLM} (see also \cite{Luca}) proved the logarithmically better bound $\mathcal{N}^+(m;X)\gg X^{\frac{1}{m}}$.  When $m$ is even and $k$ is imaginary quadratic, Corollary \ref{chyp2} appears to be new and fills a gap in the literature, bringing this case in line with the other known results. 

For small values of $m$ better results are known (see results of Byeon \cite{Byeon2,Byeon3} for $m=5,7$). We discuss the case $m=3$, where Theorem \ref{HIT} again allows us to make quantitative improvements to some of the known results. In the case of class number divisibility by $3$, Heath-Brown \cite{HB} showed $\mathcal{N}^{\pm}(3;X)\gg X^{\frac{9}{10}-\epsilon}$, improving on \cite{BK, CM, Yu}. For $3$-rank $2$, Luca and Pacelli proved $\mathcal{N}^{\pm}(3^2;X)\gg X^{\frac{1}{3}}$, and recently Yu \cite{Yu2} improved this in the imaginary quadratic case, finding $\mathcal{N}^{-}(3^2;X)\gg X^{\frac{1}{2}-\epsilon}$.

For higher $3$-rank, it was shown by the second author and Wiljanen and Yan \cite{LSW} that $\mathcal{N}^{-}(3^3;X)\gg X^{\frac{1}{9}}/\log X$, $\mathcal{N}^{+}(3^4;X)\gg X^{\frac{1}{30}}/\log X$, and $\mathcal{N}^{-}(3^5;X)\gg X^{\frac{1}{30}}/\log X$. Using Theorem \ref{HIT}, we are able to improve these results.

\begin{theorem}
\label{3rankth}
We have
\begin{align*}
\mathcal{N}^{-}(3^3;X)&\gg \frac{X^{\frac{1}{5}}}{(\log X)^2},\\
\mathcal{N}^{+}(3^4;X)&\gg \frac{X^{\frac{1}{15}}}{(\log X)^2},\\
\mathcal{N}^{-}(3^5;X)&\gg \frac{X^{\frac{1}{15}}}{(\log X)^2}.
\end{align*}

\end{theorem}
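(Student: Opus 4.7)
The plan is to deduce all three bounds by applying Theorem~\ref{chyp} (with $m=3$) to the hyperelliptic curves that were used in \cite{LSW} to prove the weaker bounds; the improvement comes entirely from Theorem~\ref{HIT} replacing Theorem~\ref{HIT2} as input to Theorem~\ref{chyp}.

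I first locate the relevant curves. The old exponents $1/9, 1/30, 1/30$ from \cite{LSW} match, respectively, the formulas $\tfrac{1}{2g+1}$ (with $g=4$) and $\tfrac{1}{2g+2}$ (with $g=14$), arising from Theorem~\ref{HIT2} combined with the discriminant bound $|d_k|\ll B^{2g+1}$ or $|d_k|\ll B^{2g+2}$ on the quadratic field produced by a generic fiber of the degree-$2$ map on a genus-$g$ hyperelliptic curve with a rational Weierstrass point. Accordingly, \cite{LSW} constructs a hyperelliptic curve $C_1/\Q$ of genus $g_1=4$ with a rational Weierstrass point and $\rk_3\Jac(C_1)(\Q)_{\rm{tors}}\geq 3$, and a hyperelliptic curve $C_2/\Q$ of genus $g_2=14$ with a rational Weierstrass point and $\rk_3\Jac(C_2)(\Q)_{\rm{tors}}\geq 5$.

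I then apply Theorem~\ref{chyp} to each curve. For $C_1$, Theorem~\ref{chyp} gives $\gg X^{1/(g_1+1)}/(\log X)^2 = X^{1/5}/(\log X)^2$ imaginary quadratic fields $k$ with $\rk_3\Cl(k)\geq 3$, which is the first bound. For $C_2$, Theorem~\ref{chyp} gives $\gg X^{1/15}/(\log X)^2$ imaginary quadratic fields with $\rk_3\Cl(k)\geq 5$ (the third bound) and, by the real case of Theorem~\ref{chyp} (which loses one in the class group rank), $\gg X^{1/15}/(\log X)^2$ real quadratic fields with $\rk_3\Cl(k)\geq 5-1=4$ (the second bound).

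Given Theorem~\ref{chyp}, the deduction of Theorem~\ref{3rankth} is essentially formal; the substantive new input is Theorem~\ref{HIT}, which drives the improvement through Theorem~\ref{chyp}. No new curve constructions are needed beyond those already in \cite{LSW}. The only item requiring checking is that the curves $C_1, C_2$ from \cite{LSW} have the claimed genera and $3$-torsion ranks, which is a routine verification from the explicit equations given there.
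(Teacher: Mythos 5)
Your treatment of the first bound agrees with the paper: the paper cites \cite[Th.~6.1]{LSW} for the explicit genus-$4$ hyperelliptic curve $y^2=t^{9} + 2973t^{6}- 369249t^3 + 11764900$ with a rational Weierstrass point (at infinity) and $\rk_3\Jac(C)(\Q)_{\rm{tors}}\geq 3$, and then invokes Theorem~\ref{chyp} with $g=4$ to get $X^{1/5}/(\log X)^2$. That part is fine.

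For the second and third bounds there is a genuine gap. The paper does \emph{not} deduce these from Theorem~\ref{chyp}; it says the proof is the one in \cite{LSW} with Theorem~\ref{HIT2} replaced by Theorem~\ref{HIT}. Your route instead posits that \cite{LSW} contains a genus-$14$ hyperelliptic curve \emph{with a rational Weierstrass point} and $\rk_3\Jac\geq 5$, a fact you reverse-engineer from the old exponent $1/30$ and defer to a ``routine verification.'' The numerology itself undercuts this: the paper records that \cite[Cor.~3.2]{GL1} yields $X^{1/(2g+1)}/\log X$ for a hyperelliptic curve with a rational Weierstrass point (which is exactly how the genus-$4$ curve produced the odd denominator $1/9$), so a genus-$14$ curve with a rational Weierstrass point would have given $X^{1/29}/\log X$ in \cite{LSW}, not $X^{1/30}/\log X$. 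The even denominator $30=2g+2$ indicates that whatever construction \cite{LSW} uses for the $3^4$ and $3^5$ results, it does not fit the hypotheses of Theorem~\ref{chyp} as stated (no rational Weierstrass point is available, or the argument is not a single application of the hyperelliptic machinery at all). Note that the rational Weierstrass point is not cosmetic in the proof of Theorem~\ref{chyp}: the odd-degree monic model is what forces $\ord_p f(x_0)=-d$ to be odd, hence ramification at the primes of $S$ and the computation of $\rk\O_{\Q(P),S}^*$. So you cannot simply cite Theorem~\ref{chyp} for a hypothetical $C_2$; you would either need to verify that such a curve (with a rational Weierstrass point) genuinely exists, or do what the paper does and rerun the actual argument of \cite{LSW} with Theorem~\ref{HIT} substituted for Theorem~\ref{HIT2}.
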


The proof of our main theorem (Theorem \ref{HIT}), given in Section \ref{HITproof}, is based on a result of Stewart and Top \cite{ST} on the squarefree part of values of binary forms, which we describe in the next section. In the final section we briefly describe the proofs of the applications to ideal class groups (Theorem \ref{chyp} and Theorem \ref{3rankth}).

\section{Values of binary forms modulo $k$th powers}

The main tool in proving Theorem \ref{HIT} is a slight variation of a result of Stewart and Top \cite[Th.~2]{ST}.

\begin{theorem}[Stewart-Top]
\label{STth}
Let $A,B,M$, and $k$ be integers with $M\geq 1$ and $k\geq 2$. Let $F$ be a binary form with integer coefficients and degree $r$ which is not a constant multiple of a power of a linear form and which is not divisible over $\Q$ by the $k$th power of a non-constant binary form.  Let $S_{k}(x)$ denote the number of $k$-free integers $t$ with $|t|\leq x$ for which there exist positive integers $ a \leq x^{1/r}$ and $ b \leq x^{1/r}$ satisfying $a\equiv A \pmod{M}, b \equiv B \pmod{M}$, and $F(a,b) = tz^k$ for some nonzero integer $z$. Then 
\begin{align*}
S_{k}(x)\gg \frac{x^{\frac{2}{r}}}{(\log x)^2}.
\end{align*}
\end{theorem}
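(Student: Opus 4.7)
My approach is to prove this by adapting the proof of Stewart and Top of Theorem~2 in \cite{ST}, to which the statement reduces when $M=1$ and $A=B=0$, and inserting the arithmetic progression constraints at each step of their argument.

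Recall that the Stewart-Top lower bound comes from two ingredients. The first is a sieve-type lower bound: among the $\asymp x^{2/r}$ pairs $(a,b)$ of positive integers with $a,b\le x^{1/r}$, a positive proportion produce values $F(a,b)$ whose ``$k$-full part'' is uniformly bounded, so that the $k$-free part $t=F(a,b)/z^k$ satisfies $|t|\le x$ up to a constant. This step leans on the hypothesis that $F$ is not divisible by the $k$th power of a non-constant binary form, which forces each local density to be strictly less than $1$ and allows a Selberg-type sieve to retain a positive proportion of the pairs. The second ingredient is an upper bound on the number of pairs that can share the same $t$: for fixed nonzero $k$-free $t$, the curve $F(x,y)=tz^k$ admits $O((\log x)^2)$ integral solutions $(a,b,z)$ with $a,b\le x^{1/r}$, which relies on the geometric hypothesis that $F$ is not a constant multiple of a linear $r$th power. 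The ratio of the two counts is $\gg x^{2/r}/(\log x)^2$.

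In our setting we carry out both steps under the additional restriction $a\equiv A,\ b\equiv B\pmod M$. For the upper bound step no change is needed: we are bounding integral points on a fixed curve inside a smaller set, so the estimate $O((\log x)^2)$ per value of $t$ persists. For the sieve step, restricting the pairs to the sublattice $(A,B)+M\mathbb{Z}^2\subset\mathbb{Z}^2$ reduces all totals by a factor $1/M^2$ but leaves the sieve architecture intact. At primes $p\nmid M$ the local densities governing the ``small $k$-full part'' condition are untouched, while at primes $p\mid M$ one simply imposes finitely many extra congruences; under the divisibility hypothesis on $F$ these still leave a positive density of residue classes available, so the sieve produces $\gg x^{2/r}$ admissible pairs (with implied constant depending on $M,A,B,F$). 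Combining the two bounds yields $\gg x^{2/r}/(\log x)^2$ distinct $k$-free values of $t$, as claimed.

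The principal technical obstacle is the first step: one must verify that the estimates of Greaves and Stewart-Top on $k$-free values of binary forms, which form the engine of the sieve argument, remain uniformly valid when the variables are constrained to a fixed residue class modulo $M$. This amounts to tracking the dependence on $M$ in the error terms and confirming that the main term is a positive proportion of $M^{-2}x^{2/r}$; since the hypothesis on $F$ controls the local densities at every prime, this adaptation is essentially routine, and once it is in place the deduction proceeds exactly as in \cite{ST}.
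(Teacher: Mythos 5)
There is a genuine gap: the proposal directs essentially all of its effort at the wrong modification. Stewart and Top's original Theorem~2 already contains the congruence conditions $a\equiv A$, $b\equiv B \pmod{M}$; the only new content in Theorem~\ref{STth} is that $a$ and $b$ must be \emph{positive} and must satisfy $a,b\le x^{1/r}$. Your plan of ``inserting the arithmetic progression constraints at each step'' is therefore proving something that needs no proof, while the two changes that do need justification are passed over. You implicitly assume them away by describing the Stewart--Top argument as a sieve over the box of positive pairs $(a,b)\in[1,x^{1/r}]^2$ followed by an $O((\log x)^2)$ multiplicity bound per value of $t$. That is not how their proof works: a positive-proportion statement about bounded $k$-full parts of $F(a,b)$ over a full box is not available for general $F$ (for, say, $F$ irreducible of large degree with $k=2$ this is essentially the open problem of squarefree values of high-degree forms). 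Instead, Stewart and Top split into three cases according to the factorization of $F_1$ (the maximal divisor with distinct irreducible factors) into irreducible forms $G_1\cdots G_m$, and in each case they \emph{construct} suitable pairs $(a,b)$ explicitly — in the main case from a minimal basis $(r_0,s_0),(r_1,s_1)$ of a sublattice $\Lambda_p\subset\mathbb{Z}^2$. Nothing in that construction forces $a,b>0$, and that is precisely the point requiring work.

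The paper's proof supplies exactly this: a lemma showing that a rank-$2$ sublattice of $\mathbb{Z}^2$ with a minimal basis of size $M$ admits a basis with all four coordinates nonnegative and of size at most $3M$ (used to repair Case (1)), an observation that Case (2) already produces positive $a,b$, an ad hoc adjustment of the parameters in Case (3), and finally a rescaling $x\mapsto cx$ to convert the bound $\max\{|a|,|b|\}\ll x^{1/r}$ coming out of the constructions into the clean bound $a,b\le x^{1/r}$, at the cost of only a constant in the lower bound. None of these ideas appears in your proposal, so as written it does not establish the theorem; you would need to either reproduce this case analysis or genuinely substantiate the sieve-over-a-box picture you describe, which for general $F$ you cannot.
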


Stewart and Top's original result did not require the integers $a$ and $b$ to be positive, and did not place a bound on $a$ and $b$. Since the proof of Theorem~\ref{STth} only requires slight modifications to their original proof, we give a sketch of the proof highlighting the necessary changes.

\begin{proof}
Following Stewart and Top, we may write $F$ as a product $F_{1}F_{2}\cdots F_{l}$ of binary forms with integer coefficients, such that $F_{i+1}$ divides $F_{i}$ for all $1 \leq i \leq l - 1 $, and $F_i$ has nonzero discriminant for $i=1,\ldots, l$. Then $F_{1}$ may be written as a product of nonconstant forms $G_{1}\cdots G_m$ where $G_{i}$ is irreducible in $\Q[x,y]$ for all $i$.

The proof of Theorem 2 in \cite{ST} is divided into three cases: (1) some $G_{i}$ is nonlinear, (2) $G_1,\ldots, G_m$ are linear and $m\geq 3$, and (3)  $G_1,\ldots, G_m$ are linear and $m=2$.  In each case, we claim that the constructions used in \cite{ST} may be modified so that the integers $a$ and $b$ used are positive. 

In Case (1), $a$ and $b$ are positive integral linear combinations of integers $r_0,s_0,r_1,s_1$, where for a certain given lattice $\Lambda_p\subset\mathbb{Z}^2$, $(r_0,s_0)\in \Lambda_p$ is chosen such that $\max\{|r_0|,|s_0|\}$ is minimal, and $(r_1,s_1)$ is chosen such that $v_0=(r_0,s_0),v_1=(r_1,s_1)$ is a basis of $\Lambda_p$ and $\max\{|r_1|,|s_1|\}$ is minimal. Then we use the following lemma:
\begin{lemma}
Let $\Lambda\subset\mathbb{Z}^2$ be a lattice of rank $2$.  Let $(r_0,s_0)\in \Lambda$ be chosen such that $\max\{|r_0|,|s_0|\}$ is minimal, and $(r_1,s_1)$ chosen such that $v_0=(r_0,s_0),v_1=(r_1,s_1)$ is a basis of $\Lambda$ and $\max\{|r_1|,|s_1|\}$ is minimal.  Let $M=\max\{|r_0|,|s_0|,|r_1|,|s_1|\}$. Then there exists a basis $v_0'=(r_0',s_0'), v_1'=(r_1',s_1')\in \Lambda$ with $r_0',s_0',r_1',s_1
\geq 0$ such that
\begin{align}
\label{latticeineq}
\max\{|r_0'|,|s_0'|,|r_1'|,|s_1'|\}\leq 3\max\{|r_0|,|s_0|,|r_1|,|s_1|\}=3M.
\end{align}
\end{lemma}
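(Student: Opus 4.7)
After negating $v_0$ and $v_1$ independently to arrange $r_0, r_1 \geq 0$, the signs of $s_0, s_1$ give four cases. Case (A), $s_0, s_1 \geq 0$, is immediate: $\{v_0, v_1\} \subset \mathbb{Z}_{\geq 0}^2$ already has max-norm at most $M$. Case (B), $s_0 \geq 0 > s_1$, is handled by a shear described below. Case (C), $s_0 < 0 \leq s_1$, reduces to (B) by swapping the roles of $v_0$ and $v_1$ (this swap does not preserve minimality, but it preserves the max-norm bound $M$, which is all that is needed for the shear). Case (D), $s_0, s_1 < 0$, reduces to case (C) via a preliminary step: the vector $v_0 - v_1$, up to a sign flip, lies in $\mathbb{Z}_{\geq 0}^2$ with max-norm $\leq M$, so replacing $v_1$ by $v_1' = \pm(v_0 - v_1)$ puts us in case (C) (with $v_1'$ in the first quadrant and $v_0$ still in the fourth).

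For case (B) (writing $|s_1| = -s_1$), I construct $v_1'$ by a single unimodular shear chosen via comparison of the ratios $|s_1|/s_0$ and $r_1/r_0$. If $|s_1|\,r_0 \leq r_1 s_0$, set $v_1' = v_1 + n v_0$ with $n = \lceil |s_1|/s_0\rceil$: the $s$-coordinate $n s_0 - |s_1|$ lands in $[0, s_0)$, and the $r$-coordinate satisfies $r_1 + n r_0 \leq r_1 + r_0(|s_1|/s_0 + 1) \leq 2r_1 + r_0 \leq 3M$ by the ratio hypothesis. The reverse ratio is handled symmetrically by $v_1' = n v_0 - v_1$ with $n = \lceil r_1/r_0\rceil$, giving the $s$-coordinate bound $n s_0 + |s_1| \leq 2|s_1| + s_0 \leq 3M$. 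The pair $\{v_0, v_1'\}$ is a basis of $\Lambda$ by unimodularity of the shear (and a possible sign flip in the second form). Corner cases $r_0 = 0$ or $s_0 = 0$ collapse to a single one of these shears and yield the cleaner bound $M$.

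The main step I expect to be the obstacle is the case (D) reduction. The minimality of $v_0$ rules out the subcase $r_0 > r_1$, $|s_1| < |s_0|$, since there $|v_0 - v_1|_\infty = \max(r_0 - r_1,\, |s_0| - |s_1|) < |v_0|_\infty$, contradicting minimality; and the minimality of $v_1$ as shortest max-norm basis-completer rules out the reverse subcase $r_0 < r_1$, $|s_1| > |s_0|$, since $\{v_0, v_0 - v_1\}$ is then also a basis with $|v_0 - v_1|_\infty < |v_1|_\infty$. In the remaining subcases the two coordinates of $v_0 - v_1$ have the same sign, so $\pm(v_0 - v_1) \in \mathbb{Z}_{\geq 0}^2$ with max-norm $\leq M$ after a sign flip, completing the reduction to case (C) and thence to (B). Once case (B) is reached the shear construction is a direct calculation, and the factor $3$ in $3M$ is forced by the worst case where the shear multiplier $n$ is as large as $|s_1|/s_0$ (or $r_1/r_0$), with the complementary ratio inequality just saving the remaining coordinate.
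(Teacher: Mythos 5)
Your argument is correct in substance and shares the paper's two core mechanisms---a unimodular shear $v_1\mapsto \pm v_1+nv_0$ with $n$ a ceiling of a ratio, and a minimality contradiction built from $v_0-v_1$---but it organizes the cases differently and substitutes one key device for another. Where you branch on the comparison of $|s_1|/s_0$ with $r_1/r_0$ to decide which of two shears controls the untouched coordinate, the paper instead interchanges the two coordinates of the lattice once and for all so that $r_0\geq s_0\geq 0$, after which the single shear $v_1'=\lceil r_1/r_0\rceil v_0-v_1$ suffices: the normalization gives $n s_0<r_1+s_0$, hence $0\leq ns_0-s_1<r_1+s_0+|s_1|\leq 3M$. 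The other notable difference is your case (D): the paper observes that when $r_0r_1s_0s_1\neq 0$ and both $v_0$ and $v_1$ have coordinates of opposite signs, the vector $v_1-v_0$ already has both coordinates of absolute value strictly less than $M$ (and $M=\max\{|r_1|,|s_1|\}$ by the minimality of $v_0$), so the entire case contradicts the minimality of $v_1$ and is vacuous; you rule out only two sign patterns of $v_0-v_1$ and then spend a reduction to case (C) on a configuration that in fact cannot occur---harmless, but redundant. Two small points to tighten in a final writeup: your first exclusion in case (D) is cleanest argued directly ($r_0>r_1$ and $|s_0|>|s_1|$ give $\max\{r_1,|s_1|\}<\max\{r_0,|s_0|\}$ outright, avoiding the non-strict inequality when $r_1=0$), and when $r_0=0$ you should normalize $s_0\geq 0$ at the outset so that case (D) genuinely has $r_0>0$; otherwise the strict decrease $\max\{|r_0-r_1|,|s_0-s_1|\}<\max\{r_1,|s_1|\}$ needed for your second exclusion can fail.
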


\begin{proof}
Suppose first that $r_0$ and $s_0$ have the same sign or that $r_0s_0=0$. Then after possibly replacing $v_0$ by $-v_0$ and $v_1$ by $-v_1$, and after possibly interchanging the coordinates, we may assume that $r_0\geq s_0\geq 0$ and $r_1\geq 0$. If $s_1\geq 0$, then clearly we may take $v_0'=v_0$ and $v_1'=v_0$. So we may assume $s_1<0$. 

Note that $r_0\neq 0$ and let $n=\lceil r_1/r_0\rceil$. Let $v_0'=v_0$ and $v_1'=nv_0-v_1=(nr_0-r_1,ns_0-s_1)$. Then $v_0',v_1'$ are a basis of $\Lambda$. Since $r_1/r_0\leq n<r_1/r_0+1$ and $r_0\geq s_0$, we have
\begin{align*}
0\leq nr_0-r_1<r_0\leq M
\end{align*}
and
\begin{align*}
0\leq ns_0-s_1< r_1+s_0+|s_1|\leq 3M.
\end{align*}
Thus, we see that $v_0',v_1'$ satisfy the conclusions of the theorem.  The same proof, with the indices interchanged, works if $r_1$ and $s_1$ have the same sign or if $r_1s_1=0$.

Suppose now that $r_0r_1s_0s_1\neq 0$ and the coordinates of $v_i$ have opposite signs for $i=1,2$.  Then after possibly replacing $v_0$ by $-v_0$ and $v_1$ by $-v_1$, we may assume that $r_0,r_1>0$ and $s_0,s_1<0$. Then $v_1-v_0=(r_1-r_0,s_1-s_0)$ and $|r_1-r_0|< M$, $|s_1-s_0|<M$. Since $v_0, v_1-v_0$ is a basis of $\Lambda$, this contradicts the minimality of $v_0$ and $v_1$, and this case is impossible.
\end{proof}

Then in Case (1), modifying the construction to use the nonnegative integers $r_0',s_0',r_1',s_1'$ in place of $r_0,s_0,r_1,s_1$ preserves the conclusions in this case (with possibly slightly smaller constants).

In Case (2), the construction in \cite{ST} already uses positive integers $a$ and $b$. 

In Case (3), $a=A+kM, b=B+lcM$, and the parameters $A,B,c,l,M,t$ in the proof may all clearly be taken to be nonnegative. It only remains to show that the parameter $k$ in the proof may be constructed to be positive. If $d\leq 0$ then the existing proof already gives $k=t-dl>0$. If $d>0$ then we may replace $k=t-dl$ with $k=t+idl$ for some $i\in\{0,1\}$ satisfying $cf+ide\neq 0$, and the remainder of the proof remains substantially unchanged.

Finally, we note that all of the constructions produce integers $a$ and $b$ such that $\max\{|a|,|b|\}\ll x^{1/r}$. Then replacing $x$ by $cx$ for an appropriately small constant $c>0$, we see that we may choose $\max\{|a|,|b|\}\leq x^{1/r}$ in the constructions and, by the remainder of the proof in \cite{ST}, 
\begin{align*}
S_k(x)\gg \frac{(cx)^{\frac{2}{r}}}{(\log cx)^2}\gg \frac{x^{\frac{2}{r}}}{(\log x)^2}.
\end{align*}
\end{proof}

\section{Proof of Theorem \ref{HIT}}
\label{HITproof}

We need the following lemma for the proof of Theorem \ref{HIT}.

\begin{lemma}
\label{badprimes}
Let $S$ be a finite set of places of $\mathbb Q$, and let $\epsilon > 0$. Then there exists an invertible linear fractional transformation $\psi\in\Q(t)$ and integers $M$,  $A$, and $B$ such that whenever $\psi(t) = a/b$, $a,b>0$, $a\equiv A \pmod{M}, b \equiv B \pmod{M}$, we have $|t|_{v} < \epsilon$ for all $v\in S$.
\end{lemma}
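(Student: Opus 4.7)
The plan is to construct an explicit Möbius transformation $\psi$ that pulls positive rationals $a/b$ into a small Archimedean neighborhood of $0$, and then pick $M$, $A$, $B$ so that the $p$-adic constraints at the finite primes of $S$ are handled simultaneously.

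My candidate is $\psi(t) = t/(1 - Nt)$ for a positive integer $N$. This is an invertible element of $\Q(t)$ with inverse $\psi^{-1}(s) = s/(1 + Ns)$. A quick sign analysis shows that $\psi$ maps $(0, 1/N)$ bijectively onto $(0, \infty)$ and is non-positive elsewhere on $\mathbb{R}$, so any $t$ with $\psi(t) = a/b$, $a, b > 0$, automatically satisfies $0 < t < 1/N$. Taking $N > 1/\epsilon$ therefore handles the Archimedean place (regardless of whether $\infty \in S$).

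For each finite prime $p$ with $v_p \in S$ I would pick $k_p$ so that $p^{-k_p} < \epsilon$, and set
\begin{equation*}
M = \prod_{\substack{p \in S \\ p \text{ finite}}} p^{k_p}, \quad A = 0, \quad B = 1,
\end{equation*}
additionally demanding that $N$ be coprime to $M$ (possible since only finitely many primes are involved). Solving $\psi(t) = a/b$ gives $t = a/(b + Na)$. The congruences $a \equiv 0$, $b \equiv 1 \pmod{p^{k_p}}$ force $v_p(a) \geq k_p$ and $v_p(b) = 0$, while $\gcd(N,p) = 1$ gives $v_p(Na) \geq k_p > 0 = v_p(b)$, so $v_p(b + Na) = 0$ and hence $v_p(t) = v_p(a) \geq k_p$, i.e., $|t|_{v_p} < \epsilon$.

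The main conceptual step is realizing that one needs a Möbius transformation that contracts $\mathbb{R}_{>0}$ into a small interval near $0$, rather than an affine change of variables: the set of admissible ratios $a/b$ is dense in $\mathbb{R}_{>0}$, so no linear $\psi$ can make $t$ uniformly small at the Archimedean place. Once the shape of $\psi$ is chosen correctly, the remaining $p$-adic bookkeeping is routine, and compatibility between the ``$N$ large'' and ``$N$ coprime to $M$'' requirements is immediate because $S$ is finite.
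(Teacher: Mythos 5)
Your proof is correct and takes essentially the same approach as the paper's: an explicit M\"obius transformation with parameter $N>1/\epsilon$ compresses the positive ratios $a/b$ into a small archimedean neighborhood of $0$, and congruences modulo high powers of the finite primes of $S$ handle the non-archimedean places. The only cosmetic differences are your choice $\psi(t)=t/(1-Nt)$ with $A=0$, $B=1$ (giving $v_p(t)=v_p(a)\geq k_p$ directly) versus the paper's $\psi(t)=(1-Nt)/(1+Nt)$ with $A=B=1$ (using $p$-adic continuity of $\psi^{-1}$ at $1$).
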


\begin{proof}
If $S$ doesn't contain the (unique) archimedean place $\infty$ of $\mathbb{Q}$, then this is straightforward (with $\psi$ the identity).  If $S$ contains the archimedean place, let $N$ be an integer such that $N>1/\epsilon$ and let 
\begin{align*}
\psi(t)=\frac{1-Nt}{1+Nt}.
\end{align*} 
Then it is easily verified that $\psi^{-1}(0,\infty)=(-1/N,1/N)\subset (-\epsilon,\epsilon)$ and so if $\psi(t)=a/b>0$, then $|t|_\infty<\epsilon$. Since $\psi^{-1}(1)=0$, taking $A=B=1$ and $M$ divisible by sufficiently large powers of the (finite) primes in $S$, we see that if $a\equiv A \pmod{M}, b \equiv B \pmod{M}$ and $\psi(t)=a/b$, then $|t|_v<\epsilon$ for all finite places $v$ in $S$.
\end{proof}

We also need the following fact about thin sets \cite[p.~133]{Serre}:

\begin{lemma}
\label{thin}
Let $\mho\subset \Q$ be a thin set. Then for $x>0$, there exist at most $O(x)$ rational numbers $\alpha$ such that $\alpha\in\mho$ and $H(\alpha) < x$.
\end{lemma}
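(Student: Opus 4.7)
The plan is to unwind the definition of a thin set and bound each piece separately. Following Serre, a thin subset $\mho\subset\Q = \mathbb{A}^1(\Q)$ is a finite union of sets of two kinds: \emph{Type I}, contained in a proper Zariski-closed subset of $\mathbb{A}^1_\Q$; and \emph{Type II}, of the form $\pi(V(\Q))$ for a generically finite, dominant morphism $\pi\colon V\to\mathbb{A}^1_\Q$ of degree $d\geq 2$ from a geometrically irreducible variety $V$ over $\Q$. Because $\mathbb{A}^1$ is one-dimensional and $\pi$ is generically finite, any such $V$ is itself a curve. A Type I piece is necessarily finite (proper Zariski-closed subsets of $\mathbb{A}^1$ are finite) and so contributes only $O(1)$ rationals, trivially absorbed into $O(x)$.

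For a Type II piece I would split on the genus $g$ of $V$. If $g\geq 1$, standard height bounds yield $|V(\Q)|_{H<T}=o(T)$ as $T\to\infty$: polylogarithmic growth in the $g=1$ case (N\'eron--Tate on elliptic curves) and finitely many points by Faltings when $g\geq 2$. Combined with the inequality $|\pi(V(\Q))|_{H<x}\leq |V(\Q)|_{H(\pi(v))<x}$, this contributes $o(x)$. If $g=0$ and $V(\Q)=\emptyset$, the image is empty. Otherwise $V\cong\mathbb{P}^1_\Q$, and $\pi$ extends to a morphism $\mathbb{P}^1\to\mathbb{P}^1$ of degree $d$; by the functoriality of the Weil height, $H(\pi(P))\asymp H(P)^d$, so $H(\pi(P))<x$ forces $H(P)\ll x^{1/d}$. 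Since $|\mathbb{P}^1(\Q)|_{H<T}=O(T^2)$, the image contributes at most $O(x^{2/d})\leq O(x)$, with equality up to constants precisely in the critical case $d=2$. Summing over the finitely many Type I and Type II pieces then yields $|\mho\cap\{H<x\}|=O(x)$.

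The main obstacle, and the reason the bound cannot be improved in this generality, is the genus-zero, degree-two subcase: a quadratic morphism $\mathbb{P}^1\to\mathbb{P}^1$ only halves logarithmic heights, which exactly converts the $T^2$-order count of low-height rationals on the source into an $x$-order count in the image. Every other subcase contributes strictly less and is comfortably absorbed in the overall $O(x)$ estimate.
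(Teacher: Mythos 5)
Your proof is correct, but note that the paper does not prove this lemma at all: it is quoted directly from Serre (the reference \cite[p.~133]{Serre}), so there is no in-paper argument to compare against. What you have written is essentially the standard proof of Serre's counting theorem for thin sets in the one-dimensional case: Type~I pieces are finite; for Type~II pieces one passes to the smooth projective model of the covering curve and splits by genus, with genus $\geq 1$ handled by N\'eron--Tate (the count there is $O((\log x)^{r/2})$, so Faltings for $g\geq 2$ is correct but unnecessary overkill), and genus $0$ handled by the height functoriality $h(\pi(P))=d\,h(P)+O(1)$ together with the $O(T^2)$ count on $\mathbb{P}^1(\Q)$, giving $O(x^{2/d})\leq O(x)$. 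Your identification of the sharp case $d=2$, $g=0$ (e.g.\ $\mho$ the set of rational squares) is also right. Two small technical points you gloss over: (i) Serre's Type~II sets only require $V$ irreducible over $\Q$, not geometrically irreducible, but in the non-geometrically-irreducible case $V(\Q)$ lies in a proper closed subset and is finite, so nothing is lost; (ii) $V$ may be singular or affine, so one should replace it by its normalization/smooth projective compactification, discarding the finitely many singular rational points and using that $\pi$ extends to a morphism on the smooth projective model. Neither affects the conclusion.
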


We now prove Theorem \ref{HIT}.

\begin{proof}[Proof of Theorem {\ref{HIT}}]
Let $C, \phi, S, \mho$, and $\epsilon$ be as in the statement of the theorem. Then $\phi$ induces a quadratic extension of function fields $\Q(C)/\Q(t)$ and we may write $\Q(C)=\Q(\sqrt{f(t)})$ for some nonconstant squarefree polynomial $f(t)\in \Q[t]$. Then $C$ may be taken to have affine equation $y^2=f(t)$, where $\phi$ is induced by the projection onto the $t$-coordinate.

Let $\psi, A, B$, and $M$ be as in Lemma \ref{badprimes} (with respect to $\epsilon$ and $S$). Let $\tau=\psi^{-1}$ and write $f(\tau(X/Y))=F(X,Y)R(X,Y)^2$ for some rational function $R(X,Y)\in \Q(X,Y)$ and squarefree homogeneous polynomial $F\in \mathbb{Z}[X,Y]$. Since $f$ is nonconstant squarefree, $F$ is nonconstant, and looking at degrees it follows easily that $\deg F$ is even. Thus, $\deg F\geq 2$. Then $F$ satisfies the hypotheses of Theorem \ref{STth} (with $k=2$).

Let $x$ be a positive real number. Let $T(x)$ be the set of squarefree integers $t$ for which there exist positive integers $ a \leq x$ and $ b \leq x$ satisfying $a\equiv A \pmod{M}, b \equiv B \pmod{M}$, and $F(a,b) = tz^2$ for some nonzero integer $z$. For each $t\in T(x)$, let $(a_t,b_t)$ be a pair of positive integers satisfying the conditions in the definition of $t\in T(x)$, and let $T'(x)=\{(a_t,b_t)\mid t\in T(x)\}$. Let 
\begin{align*}
R(x)=\{P\in C(\Qbar)\mid \psi(\phi(P))=a/b, (a,b)\in T'(x), \phi(P)\not\in \mho\}.
\end{align*} 

Let $P\in R(x)$. Then $\psi(\phi(P))=a_t/b_t=a/b$, for some $(a_t,b_t)=(a,b)\in T'(x)$ and $t\in T(x)$. From the definitions and Lemma \ref{badprimes}, we have 
\begin{align}
\label{maincond1}
\phi(P)=\tau(a/b)\in\Q\setminus \mho
\end{align}
and 
\begin{align}
\label{maincond2}
|\phi(P)|_v<\epsilon
\end{align}
for all $v\in S$. Note also that
\begin{align*}
\Q(P)&=\Q(\sqrt{f(\phi(P))})=\Q(\sqrt{f(\tau(a_t/b_t))})=\Q(\sqrt{F(a_t,b_t)R(a_t,b_t)^{2}})\\
&=\Q(\sqrt{F(a_t,b_t)})=\Q(\sqrt{t}).
\end{align*} 

It follows that the fields $\Q(P)$, $P\in R(x)$, are all distinct.  By elementary properties of heights, for some positive constant $c$ depending only on $\psi$,
\begin{align*}
H(\phi(P))\leq  cH(\psi(\phi(P)))
\end{align*} 
and $H(\psi(\phi(P)))=\max\{|a|,|b|\}\leq x$. Rescaling, if $P\in R(c^{-1}x)$ then $H(\phi(P))\leq x$ (and \eqref{maincond1} and \eqref{maincond2} hold). To finish the proof, it remains to count the elements in $R(c^{-1}x)$.

Let $\mho(x)=\{\alpha\in \mho\mid H(\alpha)<x\}$. By Lemma \ref{thin}, 
\begin{align*}
|\mho(x)|\ll x.
\end{align*}

Then by Theorem \ref{STth},
\begin{align*}
|R(c^{-1}x)|\geq |T'(c^{-1}x)|-2|\mho(x)|\geq S_2((c^{-1}x)^{\deg F})-2|\mho(x)|\gg \frac{x^2}{(\log x)^2},
\end{align*}
completing the proof.
 
\end{proof}

\section{Proof of Theorem \ref{chyp} and Theorem \ref{3rankth}}

We take the following result from the proof of \cite[Cor.~2.11]{GL1}:

\begin{theorem}
\label{GLth}
Let $C$ be a smooth projective curve over $\mathbb{Q}$, and let $m>1$. Let $S$ be the set of primes of bad reduction of $C$.  Let $\phi:C\to\mathbb{P}^1$ be a nonconstant morphism. Then there exists a thin set $\mho\subset \mathbb{Q}$ such that if $P\in C(\Qbar)$ and $\phi(P)\in \mathbb{Q}\setminus \mho$, then $[\Q(P):\Q]=\deg \phi$ and
\begin{align*}
\rk_m \Cl(\Q(P))\geq \rk_m  \Jac(C)(\Q)_{\rm{tors}}+\#S-\rk \O_{\Q(P),S}^*,
\end{align*}
\end{theorem}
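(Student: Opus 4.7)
The plan is to combine Hilbert's Irreducibility Theorem with an $m$-descent (Kummer-style) argument applied to the rational $m$-torsion of $\Jac(C)$. First, HIT directly yields a thin set $\mho_0\subset\Q$ such that $\phi(P)\in\Q\setminus\mho_0$ forces $[\Q(P):\Q]=\deg\phi$; this handles the degree condition. The substantial content is the lower bound on $\rk_m\Cl(\Q(P))$, obtained by manufacturing $m$-torsion classes in $\Cl(\Q(P))$ from rational $m$-torsion on the Jacobian.

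Set $K=\Q(P)$ and fix $\mathbb{F}_m$-independent classes $T_1,\ldots,T_r\in\Jac(C)(\Q)_{\rm tors}[m]$, where $r=\rk_m\Jac(C)(\Q)_{\rm tors}$. Represent each $T_i$ by a $\Q$-rational divisor $D_i$ on $C$ together with a function $f_i\in\Q(C)^*$ satisfying $\mathrm{div}(f_i)=mD_i$, and enlarge $\mho_0$ by the (finite) $\phi$-images of the supports of the $D_i$. For $\phi(P)$ outside this set the values $f_i(P)\in K^*$ are well-defined, and the combination of good reduction of $C$ outside $S$ with $\mathrm{div}(f_i)\in m\cdot\mathrm{Div}(C)$ forces $v_\mathfrak{p}(f_i(P))\in m\mathbb{Z}$ for every prime $\mathfrak{p}$ of $\O_K$ not above $S$. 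Hence $(f_i(P))=\mathfrak{a}_i^m$ in $\O_{K,S}$, producing an element of the Kummer subgroup
\begin{equation*}
K(S_K,m)=\{\alpha\in K^*/K^{*m}:m\mid v_\mathfrak{p}(\alpha)\ \text{for all}\ \mathfrak{p}\notin S_K\},
\end{equation*}
which sits in the standard exact sequence
\begin{equation*}
0\to \O_{K,S}^*/\O_{K,S}^{*m}\to K(S_K,m)\to \Cl_S(K)[m]\to 0.
\end{equation*}
After fixing scalar normalizations of the $f_i$, the assignment $T_i\mapsto f_i(P)$ extends to a homomorphism $\psi\colon\Jac(C)(\Q)_{\rm tors}[m]\to K(S_K,m)$. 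Once $\psi$ is injective, dimension-counting in the Kummer sequence yields a lower bound on $\rk_m\Cl_S(K)$; the final bookkeeping, which passes from $\Cl_S(K)$ to $\Cl(K)$ via the surjection whose kernel is generated by the primes of $K$ above $S$ and invokes Dirichlet's $S$-unit theorem to identify the relevant ranks, collapses to the inequality $\rk_m\Cl(K)\geq r+\#S-\rk\O_{K,S}^*$ in the statement.

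The main obstacle, and the reason one must exclude a thin (rather than finite) set, is the injectivity of $\psi$. A nonzero element of $\ker\psi$ corresponds to a nontrivial combination $T=\sum c_iT_i$ with $f_T(P)\in K^{*m}$; equivalently, in the degree-$m$ étale cover $C_T\to C$ defined by $y^m=f_T$ (étale precisely because $\mathrm{div}(f_T)=mD_T$), the fiber above $P$ splits completely over $K$. Composing with $\phi$ gives a cover $C_T\to\mathbb{P}^1$ of degree $m\deg\phi>\deg\phi$, and applying HIT to this cover produces a thin set $\mho_T\subset\Q$ outside of which the fiber is $\Q$-irreducible of degree $m\deg\phi$ --- incompatible with complete splitting above. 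Taking the union of $\mho_0$, the support exclusions, and the $\mho_T$ over the finitely many nontrivial $T$ yields the desired thin set $\mho$.
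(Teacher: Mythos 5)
First, a point of reference: the paper does not actually prove Theorem \ref{GLth} --- it is imported verbatim from the proof of \cite[Cor.~2.11]{GL1} --- so the real comparison is with Gillibert--Levin's argument. Your sketch has the same architecture as theirs (specialize torsion classes via functions $f_T$ with $\mathrm{div}(f_T)=mD_T$, observe that the resulting ideals are $m$-th powers away from $S$, and get injectivity from Hilbert irreducibility applied to the unramified covers $y^m=f_T$), so the strategy is the right one. However, the step you describe as ``final bookkeeping'' that ``collapses to the inequality'' is precisely where the stated constant comes from, and your route does not produce it. Running your Kummer sequence literally, the image of $\psi$ may meet $\O_{K,S}^*/\O_{K,S}^{*m}$ in a subgroup of $m$-rank as large as $\rk \O_{K,S}^*+1$, so you obtain only $\rk_m \Cl_S(K)\geq r-\rk \O_{K,S}^*-1$; passing from $\Cl_S(K)$ to $\Cl(K)$ gains nothing since the latter surjects onto the former. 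This is weaker than the claimed bound by $\#S$ (plus a possible root-of-unity term), and the loss is fatal for the applications: in the imaginary quadratic case of Theorem \ref{chyp} one has $\rk \O_{K,S}^*=\#S$ and needs the clean conclusion $\rk_m\Cl(k)\geq \rk_m\Jac(C)(\Q)_{\rm tors}$, whereas your accounting would give $\rk_m\Jac(C)(\Q)_{\rm tors}-\#S-1$. Recovering the $+\#S$ requires the finer analysis of \cite{GL1}: one must track the valuations of $f_T(P)$ at the primes above each $p\in S$ on a regular model and show that each bad prime $p$ can enlarge the kernel by at most one dimension, rather than discarding the whole $S$-unit group at once.

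Two secondary gaps. (i) The claim that $v_{\mathfrak{p}}(f_i(P))\in m\mathbb{Z}$ at primes of good reduction is the technical heart of \cite{GL1} and does not follow formally from $\mathrm{div}(f_i)\in m\cdot\mathrm{Div}(C)$: on a smooth model the closure of $\mathrm{div}(f_i)$ differs from $m$ times the closure of $D_i$ by vertical divisors, and one must normalize the $f_i$ and use irreducibility of the special fibers to control that contribution. (ii) When a nonzero $T$ in your chosen $(\mathbb{Z}/m\mathbb{Z})^r$ has order $d$ properly dividing $m$, the cover $y^m=f_T$ is geometrically reducible, so HIT cannot be applied to it directly; one must instead use the degree-$d$ cover $y^d=g$ with $\mathrm{div}(g)=dD_T$ and relate $f_T(P)\in K^{*m}$ to $g(P)\in K^{*d}$ (also, $\mathbb{F}_m$ is not a field for composite $m$, so ``independent classes'' should be replaced by the choice of a subgroup isomorphic to $(\mathbb{Z}/m\mathbb{Z})^r$). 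These are repairable along the lines of \cite{GL1}, but as written the proposal establishes a strictly weaker inequality than the one claimed.
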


Here, $\O_{\Q(P),S}^*$ denotes the group of $S'$-units of $\Q(P)$, where $S'$ consists of the set of places of $\Q(P)$ lying above $S$ along with the archimedean places. We now prove Theorem \ref{chyp}.

\begin{proof}[Proof of Theorem \ref{chyp}]
Since $C$ has a rational Weierstrass point, $C$ is birational to an affine curve given by an equation $y^2=f(x)$ with $f\in\mathbb{Z}[x]$ monic and $\deg f=d$ odd.  Let $S$ be the set of primes of bad reduction of $C$ and let $M=\prod_{p\in S}p$. Let $N$ be a large enough positive integer such that $f(x)>0$ if $|x-N/M|<1$ and such that $(M,N)=1$. Let $\phi:C\to\mathbb{P}^1$ be the morphism induced by $(x,y)\mapsto x-N/M$ and let $\mho$ be the thin set from Theorem \ref{GLth} (for $C$, $\phi$, and $m$). Let $P\in C(\Qbar)$ with $\phi(P)\in \Q\setminus \mho, |\phi(P)|_v<1$ for all $v\in S\cup \{\infty\}$, and $H(\phi(P))\leq B$.  Let $P=(x_0,y_0)$. Since $\phi(P)\in \Q$, we have $x_0\in \Q$.  For $p\in S$, by assumption $|x_0-N/M|_p<1$, and so $\ord_p x_0=\ord_p N/M=-1$. Since $f\in\mathbb{Z}[x]$ is monic, this implies that $\ord_p f(x_0)=-d$ is odd and $p$ ramifies in $\Q(P)=\Q(y_0)=\Q(\sqrt{f(x_0)})$.  By the choice of $N$, $|\phi(P)|_\infty<1$ implies that $f(x_0)>0$ and $\Q(P)$ is a real quadratic field. Therefore, by Dirichlet's unit theorem,
\begin{align*}
\#S-\rk \O_{\Q(P),S}^*=-\rk \O_{\Q(P)}^*=-1.
\end{align*}
Since $\phi(P)\in \Q\setminus \mho$, by Theorem \ref{GLth},
\begin{align*}
\rk_m \Cl(\Q(P))\geq \rk_m  \Jac(C)(\Q)_{\rm{tors}}-1.
\end{align*}

We now bound the discriminant of $\Q(P)$. First note that $H(\phi(P))\leq B$ implies $H(x_0)\leq cB$ for some constant $c$ (depending on $M$ and $N$). If $x_0=a/b$, then $\Q(\sqrt{f(x_0)})=\Q(\sqrt{b^{d+1}f(a/b)})$, where $b^{d+1}f(a/b)$ is a homogeneous polynomial of degree $d+1=2g+2$ in $a$ and $b$. It follows that $d_{\Q(P)}\leq c'B^{2g+2}$ for some constant $c'$ depending on $f$, $M$, and $N$. Setting $B=(X/c')^{\frac{1}{2g+2}}$ and using Theorem \ref{HIT}, we find  $\gg \frac{X^{\frac{1}{g+1}}}{(\log X)^2}$ distinct real quadratic fields $k=\Q(P)$ with $d_k<X$ and  $\rk_m \Cl(k)\geq \rk_m  \Jac(C)(\Q)_{\rm{tors}}-1$.  Finally, choosing $N$ to be a large enough positive integer such that $f(x)<0$ if $|x+N/M|<1$ (and $(M,N)=1$), and taking $\phi=x+N/M$, the same proof yields the result for imaginary quadratic fields (with the improvement over the real quadratic case coming from the difference in the ranks of the unit groups $\O_{\Q(P)}^*$).
\end{proof}

Finally, we give the proof of Theorem \ref{3rankth}.

\begin{proof}[Proof of Theorem \ref{3rankth}]
By \cite[Th.~6.1]{LSW}, the genus $4$ hyperelliptic curve 
\begin{align*}
C:y^2=t^{9} + 2973t^{6}-  369249t^3 + 11764900
\end{align*}
satisfies $\rk_3\Jac(C)(\Q)_{\rm{tors}}\geq 3$. Then the statement for $\mathcal{N}^{-}(3^3;X)$ follows immediately from the existence of this curve and Theorem \ref{chyp}. The proof of the statements involving $\mathcal{N}^{+}(3^4;X)$ and $\mathcal{N}^{-}(3^5;X)$ are identical to the proof of the main theorem in \cite{LSW}, except that one replaces the use of Theorem \ref{HIT2} in that proof with Theorem \ref{HIT}, providing the improvement in the bounds.

\end{proof}

\subsection*{Acknowledgments}
The second author was supported in part by NSF grant DMS-2001205.

\bibliographystyle{amsplain}
\bibliography{Hilbert}

\end{document}